 \journal{Advances in Applied Mathematics}          
\theoremstyle{plain}
\newtheorem{theorem}{Theorem}[section]
\theoremstyle{definition}
\newtheorem{definition}[theorem]{Definition}
\newtheorem{example}[theorem]{Example}
\theoremstyle{remark}
\definecolor{todo}{rgb}{1,0,0}
\definecolor{answer}{rgb}{0,0,1}
\definecolor{new}{rgb}{1,0,1}
\definecolor{conditional}{rgb}{0,1,0}
\definecolor{e-mail}{rgb}{0,.40,.80}
\definecolor{reference}{rgb}{.20,.60,.22}
\definecolor{mrnumber}{rgb}{.80,.40,0}
\definecolor{citation}{rgb}{0,.40,.80}
\newcommand{\Le}{\leqslant}
\newcommand{\Ge}{\geqslant}
\newcommand{\K}{K}
\newcommand{\GL}{\mathbf{GL}}
\newcommand{\Mn}{\mathbf{M}}
\newcommand{\Quot}{\mathrm{Quot}}
\def\Q{{\mathbb Q}}
\def\Z{{\mathbb Z}}
\begin{document}

\begin{frontmatter}

\begin{keyword}
Differential algebra\sep Tannakian category\sep
Parameterized differential Galois theory\sep Atiyah extension
\MSC[2010] primary 12H05\sep secondary 12H20\sep 13N10\sep 20G05\sep 20H20\sep 34M15  
\end{keyword}

\author{Alexey Ovchinnikov}
\address{CUNY Queens College, Department of Mathematics, 65-30 Kissena Blvd,
Queens, NY 11367-1597, USA\\
CUNY Graduate Center, Department of Mathematics, 365 Fifth Avenue,
New York, NY 10016, USA
}
\ead{aovchinnikov@qc.cuny.edu}

\begin{keyword}
difference algebra\sep differential algebra\sep integrability conditions\sep difference Galois theory \sep differential Galois theory
\MSC[2010] primary 12H05\sep 12H10\sep secondary \sep 39A13
\end{keyword}

\title{Difference integrability conditions for parameterized linear difference and differential equations}

\begin{abstract}
This paper is devoted to  integrability conditions for systems of linear difference and differential  equations with difference parameters. It is shown that  such a system is difference isomonodromic if and only if it is difference isomonodromic with respect to each parameter separately. Due to this result, it is no longer necessary to solve non-linear difference  equations to verify isomonodromicity, which will improve efficiency of computation with these systems.\end{abstract}

\end{frontmatter}

\section{Introduction}
In this paper, we improve the algorithm that verifies if a system of linear difference and differential equations with difference parameters is isomonodromic (Definition~\ref{def:isomonodromic}). Given a system of difference or differential equations with parameters, it is natural to ask if its solutions satisfy extra linear difference equations with respect to the parameters. 
Consider  an algorithm whose input consists of a field $\K$ with commuting automorphisms $\phi_1,\ldots,\phi_q,\sigma_1,\ldots,\sigma_r$ and derivations $\partial_1,\ldots,\partial_m$ on $\K$ and a system of difference equations 
\begin{equation}\label{eq:difference1}
\phi_1(Y)=A_1Y,\ldots,\phi_q(Y)=A_qY,\quad \partial_1Y=B_1Y,\ldots,\partial_mY=B_mY,
\end{equation} where $A_i\in \GL_n(\K)$ and $B_j \in \Mn_n(\K)$, $1\Le i\Le q$, $1\Le j\Le m$. Its output consists of such additional linear difference equations
\begin{equation}\label{eq:differenceextra}
\sigma_1Y=C_1Y,\ldots,\sigma_rY=C_rY,
\end{equation} where $C_i\in \GL_n(\K)$, $1\Le i\Le r$, if they exist, for which there exists an invertible matrix solution of~\eqref{eq:difference1} that satisfies~\eqref{eq:differenceextra} as well. 
For an invertible matrix solution of~\eqref{eq:difference1} to possibly exist, the $A_i$'s and $B_j$'s must satisfy the integrability conditions~\eqref{eq:intdifference},~\eqref{eq:intdifferential1}, and~\eqref{eq:intdifferential2}. Moreover, the existence of the $C_i$'s above is equivalent to the existence of $C_i$'s satisfying another large collection of integrability conditions~\eqref{eq:intdiffdiff1},~\eqref{eq:intdiffdiff2} and~\eqref{eq:intdiffdiff3}. Such systems~\eqref{eq:difference1} are called isomonodromic in analogy with differential equations \cite{Sabbah,Sibuya,JM,JMU,GO}. Isomonodromy problems for $q$-difference equations and their relations with $q$-difference Painlev\'e equations were studied in~\cite{Joshi1,Joshi2}.

The main result, Theorem~\ref{thm:main}, states that~\eqref{eq:intdiffdiff3}, which are non-linear difference equations in the $C_i$'s, do not have to be verified to check the existence of the $C_i$'s. More precisely,  the existence of $C_i$'s satisfying~\eqref{eq:intdiffdiff1} and~\eqref{eq:intdiffdiff2} implies the existence of new  invertible matrices that satisfy all of~\eqref{eq:intdiffdiff1},~\eqref{eq:intdiffdiff2}, and~\eqref{eq:intdiffdiff3}.

Since, due to our result, we only need to check the existence of solutions (that have entries in the ground field $\K$) of a system of linear difference and differential equations, a complexity estimate 
for verifying whether a system of difference and differential equations is isomonodromic becomes possible due to~\cite{AGKCASC,AGK,BCEW,BCBC,BGS,BGS2007}.

A similar problem but for systems of differential equations was considered in~\cite{GO}, motivated by the classical results~\cite{JM,JMU}. Differential categories developed in~\cite{GGO} formed the main technical tool in~\cite{GO}. On the contrary, our proofs are constructive, which makes them more suitable for practical use. In particular, from our proof, one can derive an algorithm that, given a common invertible solution of~\eqref{eq:intdiffdiff1} and~\eqref{eq:intdiffdiff2}, computes a common invertible solution of all~\eqref{eq:intdiffdiff1},~\eqref{eq:intdiffdiff2}, and~\eqref{eq:intdiffdiff3}.

If one attempts to find a common solution for all of~\eqref{eq:intdiffdiff1},~\eqref{eq:intdiffdiff2}, and~\eqref{eq:intdiffdiff3} in a naive way,  one might have to adjoin new elements to the base field $\K$ that are not constant with respect to the derivations and automorphisms, which is not desirable. On the other hand, Theorem~\ref{thm:main} shows how one can limit, in a computable way, these newly adjoint elements to those that are constant with respect to all derivations and the automorphisms that do not correspond to the parameters, which is another advantage of this approach. 
This is also different from the approach taken in \cite{BOS} to give an explicit treatment of a similar problem but for differential parameters, where a linearly differentially closed assumption is imposed on the field of constants with respect to the automorphisms.

The main result is expected to have further applications. In parameterized differential Galois theory~\cite{PhyllisMichael}, there are several algorithms for computing Galois groups. For $2\times 2$ systems, they are given in~\cite{Carlos,Dreyfus}. An algorithm, more general in terms of the order of the system,~\cite{MiOvSi,MiOvSi2} uses the differential analogues~\cite{GO} of our results to make the computation more efficient. 
Our results may prove useful in the design of an algorithm for computing parameterized difference Galois groups with difference parameters, as it has happened in the differential case, once \cite{DHW} or \cite{OW} is generalized to the case of several difference parameters, which is a challenge on its own.

These and other difference and differential Galois theories \cite{CharlotteMichael,CharlotteComp,CharlotteLucia12,DiVizioHardouin:DescentandConfluence,HDV,Takano} have been developed to compute all difference and differential algebraic relations that solutions of systems of linear difference and differential equations satisfy and have found many practical applications.

The paper is organized as follows. Notation is introduced and the basic notions of differential and difference algebra are reviewed in Section~\ref{sec:basicdefinitions}. The integrability conditions are discussed in Section~\ref{sec:intconddef}, where the main result is illustrated by showing a concrete example, in which we perform computation with the basic hypergeometric $q$-difference equation with parameters (Example~\ref{ex:HG}). The main result and its proof are described in Section~\ref{sec:mainresult}.
\section{Basic definitions}\label{sec:basicdefinitions}
A $\Delta$-ring is a commutative
associative ring with unit $1$ together with a set $\Delta=\{\partial_1,\dots,\partial_m\}$
of commuting derivations $\partial_i\colon R\to R$ such that
$$
\partial_i(a+b) = \partial_i(a)+\partial_i(b),\quad \partial_i(ab) =
\partial_i(a)b + a\partial_i(b),\quad a,b\in R,\ \ 1\Le i\Le m.
$$
For example, $\Q$ is a $\{\partial\}$-field 
with the unique
 zero derivation. For every $f \in \mathbb{C}(x)$, there exists a unique derivation
$\partial : \mathbb{C}(x)\to \mathbb{C}(x)$ with $\partial(x)=f$, turning $\mathbb{C}(x)$ into a $\{\partial\}$-field.

For a ring $R$, $\Mn_n(R)$ denotes the set of $n\times n$ matrices with entries in $R$, and $\GL_n(R)$ are the invertible matrices in $\Mn_n(R)$.

A $\Phi$-ring $R$ is a commutative associative ring with unit $1$ and a set $\Phi = \{\phi_1,\ldots,\phi_q\}$ of commuting automorphisms $\phi_i: R\to R$, $1\Le i\Le q$. A $\{\Phi,\Delta\}$-ring $R$ is a $\Phi$-ring and a $\Delta$-ring such that, for all $\phi\in\Phi$ and $\partial\in\Delta$, $\phi\partial=\partial\phi$. 
For subsets $\Phi'\subset \Phi$ and $\Delta'\subset \Delta$, let 
$$
R^{\Phi',\Delta'}:=\big\{r\in R\:|\: \phi(r)=r,\ \delta(r) =0,\ \text{for all}\ \phi\in\Phi',\ \delta\in\Delta'\big\},
$$
the set of $\{\Phi',\Delta'\}$-constants of $R$.

To discuss difference parameters in systems of difference and differential equations, it will be convenient for us to split the set of automorphisms into two subsets. In such a case, we will write $\{\Phi\cup\Sigma,\Delta\}$ to emphasize that the set $\Phi\cup\Sigma$ of given automorphisms is split into two subsets, $\Phi$ and $\Sigma$. In this case, the $\Sigma$ corresponds to the parameters.

Let $\Sigma=\{\sigma_1,\ldots,\sigma_r\}$, $R$ be a $\Sigma$-ring, and $y_1,\ldots,y_n$ be indeterminates over $R$. The ring $${R\{y_1,\ldots,y_n\}}_\Sigma := R\left[\theta y_i\:\big|\: \theta=\sigma_1^{i_1}\cdot\ldots\cdot\sigma_r^{i_r},\ 1\Le i\Le n,\ i_j\in\Z,\ 1\Le j\Le r\right]$$ is called the ring of $\Sigma$-polynomials, which is naturally a $\Sigma$-ring.

\begin{example} To clarify the above, we will  show a few basic examples of $\{\Phi\cup\Sigma,\Delta\}$-rings:
\begin{enumerate}
\item $R = \Q(x_1,\ldots,x_m, a_1,\ldots,a_r)$ with $\Phi=\{\phi_1,\ldots,\phi_m\}$, $\Sigma=\{\sigma_1,\ldots,\sigma_r\}$, and $\Delta=\{\partial_1,\ldots,\partial_m\}$ defined by \begin{align*}
&\phi_i (f)(x_1,\ldots,x_m,a_1,\ldots,a_r) = f(x_1,\ldots,x_i+1,\ldots,x_m,a_1,\ldots,a_r),& f\in R,\ \ 1\Le i\Le m,\\
&\sigma_i (f)(x_1,\ldots,x_m,a_1,\ldots,a_r) = f(x_1,\ldots,x_m,a_1,\ldots,a_i+1,\ldots,a_r),&f\in R,\ \ 1\Le i\Le r,\\
&\partial_i = \partial/\partial x_i,&1\Le i\Le m,
\end{align*}
where $x_1,\ldots,x_m,a_1,\ldots,a_r$ are transcendental over $\Q$.
\item $R = \Q(x,a)$ with $\Phi = \{\phi\}$, $\Sigma=\{\sigma\}$, and $\Delta=\{\partial\}$ defined by
$$
\phi(f)(x,a) = f(q_1x,a),\quad\sigma(f)(x,a)=f(x,q_2a),\quad \partial = x\partial/\partial_x,
$$
where $0\ne q_1,q_2 \in\Q$ and $x$ and $a$ are transcendental over $\Q$.
\end{enumerate}
\end{example}
\begin{example} Difference and differential equations with difference parameters that appear in practice include, among many others,
$$
y''(x)+\big(x^3+ax+b\big)\cdot y(x)=0
$$
and
\begin{equation}\label{eq:HG}
y\big(q^2x\big) -\frac{(a+b)x-(1+c/q)}{abx-c/q}y(qx) + \frac{x-1}{abz -c/q}y(x) = 0.
\end{equation}
\end{example}

\section{Integrability conditions}\label{sec:intconddef}
In this section, the integrability conditions are introduced and an example illustrating our approach is given. The main result will be shown in Section~\ref{sec:mainresult}. 
Let $\K$ be a $\{\Phi\cup\Sigma,\Delta\}$-field of characteristic zero, where $\Phi=\{\phi_1,\ldots,\phi_q\}$, $\Sigma=\{\sigma_1,\ldots,\sigma_r\}$, and $\Delta= \{\partial_1,\ldots,\partial_m\}$, and $$A_1,\ldots,A_q \in \GL_n(\K)\quad\text{and}\quad B_1,\ldots, B_m \in \Mn_n(\K).$$ Consider the system of difference-differential equations 
\begin{equation}\label{eq:difference}\phi_1(Y)=A_1Y,\ldots,\phi_q(Y)=A_qY,\quad \partial_1(Y)=B_1Y,\ldots,\partial_m(Y)=B_mY. 
\end{equation}
If $L\supset \K$ is a $\{\Phi,\Delta\}$-ring extension and $Z \in \GL_n(L)$ satisfies~\eqref{eq:difference} then, for all $i$, $j$, $1\Le i,j\Le q$, we have
$$
\phi_j(A_i)A_jZ=\phi_j(\phi_i(Z))=\phi_i(\phi_j(Z))= \phi_i(A_j)A_iZ.
$$ 
Therefore, we obtain
\begin{equation}\label{eq:intdifference}
\phi_j(A_i)A_j = \phi_i(A_j)A_i
\end{equation}
Moreover,
\begin{align*}
\phi_j(B_i)A_jZ&=\phi_j(\partial_i(Z))=\partial_i(\phi_j(Z))=\partial_i(A_j)Z+A_jB_iZ
\end{align*}
and
\begin{align*}
\partial_j(B_i)Z+B_iB_jZ&=\partial_j(\partial_i(Z))=\partial_i(\partial_j(Z))=\partial_i(B_j)Z+B_jB_iZ,
\end{align*}
which imply that
\begin{equation}\label{eq:intdifferential1}
\phi_j(B_i)A_j=\partial_i(A_j)+A_jB_i,\quad 1\Le i\Le m,\ 1\Le j\Le q,
\end{equation}
and
\begin{equation}\label{eq:intdifferential2}
\partial_j(B_i)-\partial_i(B_j)=[B_j,B_i],\quad 1\Le i,j\Le m.
\end{equation}
If, in addition, there exist $C_1,\ldots, C_r \in \GL_n(\K)$ such that
\begin{equation}\label{eq:C}
\sigma_1(Z)=C_1Z,\ldots,\sigma_r(Z)=C_rZ,
\end{equation} 
then, similarly to the above,
\begin{align}\label{eq:intdiffdiff1}
\phi_j(C_i)A_j&=\sigma_i(A_j)C_i,&&1\Le i\Le r,\ 1\Le j\Le q,\\
\label{eq:intdiffdiff2}
\sigma_j(B_i)C_j&=\partial_i(C_j)+C_jB_i,&&1\Le i\Le m,\ 1\Le j\Le r,\\
\label{eq:intdiffdiff3}
\sigma_j(C_i)C_j&=\sigma_i(C_j)C_i,&&1\Le i,j\Le r.
\end{align} 
Therefore,~\eqref{eq:intdifference},~\eqref{eq:intdifferential1},~\eqref{eq:intdifferential2},~\eqref{eq:intdiffdiff1},~\eqref{eq:intdiffdiff2}, and~\eqref{eq:intdiffdiff3} are necessary conditions for the existence of a common invertible matrix solution of~\eqref{eq:difference} and~\eqref{eq:C} with entries in a field $L$. They are also sufficient. Indeed, let $$L = \K(x_{11},\ldots,x_{nn}),$$
with
\begin{align*}
\partial_i((x_{rs})) = B_i(x_{rs})&,\quad 1\Le i\Le m, \quad
\phi_j((x_{rs})) = A_j(x_{rs}),\quad 1\Le j\Le q,\\
& \sigma_k((x_{rs}))=C_k(x_{rs}),\quad 1\Le k\Le r.
\end{align*}
Then~\eqref{eq:intdifference},~\eqref{eq:intdifferential1},~\eqref{eq:intdifferential2},~\eqref{eq:intdiffdiff1},~\eqref{eq:intdiffdiff2}, and~\eqref{eq:intdiffdiff3} imply that $L$ is a $\{\Phi\cup\Sigma,\Delta\}$-field.

\begin{definition}\label{def:isomonodromic}The system of linear difference-differential equations $$\phi_1(Y)=A_1Y,\ldots,\phi_q(Y)=A_qY,\quad \partial_1(Y)=B_1Y,\ldots,\partial_m(Y)=B_mY$$ with $A_1,\ldots, A_q \in \GL_n(\K)$ and $B_1,\ldots,B_m \in \Mn_n(\K)$ satisfying~\eqref{eq:intdifference},~\eqref{eq:intdifferential1} and~\eqref{eq:intdifferential2} is called {\it difference isomonodromic} if there exist $C_1,\ldots,C_r \in \GL_n(\K)$ satisfying~\eqref{eq:intdiffdiff1}, \eqref{eq:intdiffdiff2}, and~\eqref{eq:intdiffdiff3}.
\end{definition}

Connections to analytic interpretations of a similar notion, differential isomonodromy, and parameterized Galois theory can be found, for example, in  \cite[\S5]{PhyllisMichael} and  \cite[\S6.2]{GO}.

\begin{example}\label{ex:HG} 
Consider the field $\K := \Q(x,a,b,c)$ as a $\Phi\cup\Sigma$-field, where $\Phi=\big\{\phi_q\big\}$, $\Sigma=\big\{\sigma_a,\sigma_b,\sigma_c\big\}$ and, for all $f\in\K$,
\begin{align*}
&\phi_q(f)(x,a,b,c) = f(qx,a,b,c),\quad\sigma_a(f)(x,a,b,c)=f(x,qa,b,c),\\
&\sigma_b(f)(x,a,b,c)=f(x,a,qb,c),\quad \sigma_c(f)(x,a,b,c)=f(x,a,b,qc).
\end{align*}
For Eq.~\eqref{eq:HG}, the matrices
$$
C_1 := \begin{pmatrix}
1&-a\\
\dfrac{aq(x-1)}{abqx-c}&\dfrac{aq(1-ax)+c(a-1)}{abqx-c}
\end{pmatrix},\quad
C_2 := \begin{pmatrix}
1&-b\\
\dfrac{bq(x-1)}{abqx-c}&\dfrac{bq(1-bx)+c(b-1)}{abqx-c}
\end{pmatrix},
$$
and
$$
C_3 := \begin{pmatrix}
\dfrac{x(c(a+b)-ab)-c^2}{cx}&\dfrac{c-abx}{x}\\
\dfrac{x-1}{x}&\dfrac{c-abx}{cx}
\end{pmatrix},
$$
which can be calculated using the {\tt RationalSolution} tool in the {\tt QDifferenceEquations} package of {\sc Maple}
(see also \cite[Ex.~2.35]{OW} for the case $a=b$ and $c=q$), satisfy~\eqref{eq:intdiffdiff1}. However,~\eqref{eq:intdiffdiff3} is satisfied only for the pair $C_1$ and $C_2$. 
So, the $q$-difference equation~\eqref{eq:HG} is difference isomonodromic over any $\Phi\cup\{\sigma_a,\sigma_b\}$-field containing $\K$. 

Applied to this situation, the proof of Theorem~\ref{thm:main} contains an algorithm that turns~\eqref{eq:HG} into a difference isomonodromic equation with respect to $\Phi\cup\Sigma$ just by extending $\K$ by at most one $\sigma_c$-transcendental element $x_1$ to $\K$, which is constant with respect to $\phi_q$. In particular, a calculation in {\sc Maple} using the above package shows that the solution space of~\eqref{eq:Z1} has dimension $d =1$ and 
\begin{equation}\label{eq:x1}
\sigma_a(x_1) =\dfrac{a-c}{qa-c}x_1 ,\quad \sigma_b(x_1)=\dfrac{b-c}{qb-c}x_1.
\end{equation}
Since $$x_1 := \dfrac{1}{(a-c)(b-c)}$$ satisfies~\eqref{eq:x1}, following the proof of Theorem~\ref{thm:main}, we replace $C_3$ by
$$
D_3 := \begin{pmatrix}
\dfrac{x(c(a+b)-ab)-c^2}{(a-c)(b-c)cx}&\dfrac{c-abx}{(a-c)(b-c)x}\\
\dfrac{x-1}{(a-c)(b-c)x}&\dfrac{c-abx}{(a-c)(b-c)cx}
\end{pmatrix}
$$
Thus, Eq.~\eqref{eq:HG} is difference isomonodromic with respect to $\Sigma$ over $\K$ with the matrices $C_1$, $C_2$, and $D_3$, and the proof of Theorem~\ref{thm:main} has helped us discover the latter matrix.
\end{example}

Let $L_1$ be a $\{\Phi\cup\Sigma,\Delta\}$-field and $L_2$ be a $\Sigma$-field containing $L_1^{\Phi,\Delta}$, then $R := L_1\otimes_{L_1^{\Phi,\Delta}} L_2$ is an integral domain \cite[Lem.~6.11]{CharlotteMichael} and, therefore, $L_1L_2 := \Quot(R)$ is a $\{\Phi\cup\Sigma,\Delta\}$-field with $\{\Phi,\Delta\}$-constants equal  $L_2$.

\section{Main result}\label{sec:mainresult}
This section contains the main result, Theorem~\ref{thm:main}, which allows one to reduce the number of integrability conditions to be tested. 

\begin{theorem}\label{thm:main} Let $A_1,\ldots,A_q \in \GL_n(\K)$ and $B_1,\ldots,B_m \in\Mn_n(\K)$ satisfy~\eqref{eq:intdifference},~\eqref{eq:intdifferential1}, and~\eqref{eq:intdifferential2}.  If there exist $C_1,\ldots,C_r\in \GL_n(\K)$ satisfying~\eqref{eq:intdiffdiff1} and~\eqref{eq:intdiffdiff2},
then there exist
\begin{enumerate}
\item[\rm 1)] a computable $\Sigma$-field $F$ generated over $\K^{\Phi,\Delta}$ by at most $(r-1)n^2$ elements and
\item[\rm 2)] $D_1,\ldots, D_r\in \GL_n(\K F)$ such that all integrability conditions are satisfied:
\begin{align}
\phi_j(D_i)A_j&=\sigma_i(A_j)D_i,&1\Le i\Le r,\ 1\Le j\Le q,\label{eq:intdiffdiff21}\\
\sigma_j(B_i)D_j&=\partial_i(D_j)+D_jB_i, &1\Le i\Le m,\ 1\Le j\Le r,\label{eq:intdiffdiff22}\\
\sigma_j(D_i)D_j&=\sigma_i(D_j)D_i, &1\Le i,j\Le r.\label{eq:intdiffdiff23}
\end{align} 
\end{enumerate}
\end{theorem}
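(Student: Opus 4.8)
The plan is to reduce the problem to solving one auxiliary linear difference-differential system together with a cocycle-type equation, and to build $F$ from the $\{\Phi,\Delta\}$-constants that this auxiliary system produces, handling the parameters one at a time after normalizing $D_1$. Concretely, I would write $D_i=C_ih_i$ with $h_i\in\GL_n(\K F)$, which is no loss of generality because $C_i$ is invertible. Substituting $D_i=C_ih_i$ into~\eqref{eq:intdiffdiff21} and~\eqref{eq:intdiffdiff22} and cancelling using the hypotheses~\eqref{eq:intdiffdiff1} and~\eqref{eq:intdiffdiff2} for $C_i$, one finds that~\eqref{eq:intdiffdiff21} and~\eqref{eq:intdiffdiff22} hold if and only if $h_i$ is an invertible solution of the \emph{adjoint system}
$$
\phi_j(h)=A_jhA_j^{-1}\ \ (1\Le j\Le q),\qquad \partial_k(h)=[B_k,h]\ \ (1\Le k\Le m).
$$
This system is integrable (its integrability conditions are formal consequences of~\eqref{eq:intdifference},~\eqref{eq:intdifferential1},~\eqref{eq:intdifferential2}), it has $h=I$ as a solution, and --- since $h^{-1}h'$ solves it whenever $h,h'$ do --- its invertible solutions over a $\{\Phi,\Delta\}$-extension form a group in which one new member is pinned down by at most $n^2$ $\{\Phi,\Delta\}$-constants (its coordinates in a $\K$-basis of the solution space). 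So the task becomes: choose invertible adjoint solutions $h_1,\dots,h_r$ over a controlled $\{\Phi\cup\Sigma,\Delta\}$-extension for which the $D_i=C_ih_i$ also satisfy~\eqref{eq:intdiffdiff23}.

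Next I would rewrite~\eqref{eq:intdiffdiff23}. Put $P_{ij}:=\sigma_i(C_j)C_i\in\GL_n(\K)$ and $Q_{ij}:=P_{ji}^{-1}P_{ij}\in\GL_n(\K)$. Elementary manipulation turns~\eqref{eq:intdiffdiff23}, for $D_i=C_ih_i$, into the twisted cocycle system
$$
T_j(h_i)\,h_j\;=\;Q_{ij}\,T_i(h_j)\,h_i,\qquad 1\Le i,j\Le r,\qquad\text{where}\quad T_j(h):=C_j^{-1}\sigma_j(h)C_j.
$$
Using~\eqref{eq:intdiffdiff1} and~\eqref{eq:intdiffdiff2} one checks that $T_j$ maps invertible adjoint solutions to invertible adjoint solutions, and that each $Q_{ij}$ is itself an invertible solution of the adjoint system, with $Q_{ii}=I$ and $Q_{ji}=Q_{ij}^{-1}$. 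Hence the entire obstruction to taking $h_1=\dots=h_r=I$ --- which would force $Q_{ij}=I$, i.e.\ exactly~\eqref{eq:intdiffdiff3} --- lives inside the adjoint solution set, the very set through which we are free to move.

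Now I would construct $F$ and the $h_i$. Normalize $D_1:=C_1$, i.e.\ $h_1:=I$; this costs no new constant and collapses the cocycle equations with $i=1$ to the linear equation $\sigma_1(h_j)=\sigma_1(C_j)^{-1}\sigma_j(C_1)\,C_j\,h_j\,C_1^{-1}$ for $2\Le j\Le r$. Together with the adjoint system this is an integrable linear $\{\Phi\cup\{\sigma_1\},\Delta\}$-system for $h_j$ over $\K$ (a direct computation from~\eqref{eq:intdiffdiff1},~\eqref{eq:intdiffdiff2} confirms the new compatibility conditions), so adjoining an invertible solution of it enlarges the ground field by at most $n^2$ new $\{\Phi,\Delta\}$-constants; doing this for $j=2,\dots,r$ uses at most $(r-1)n^2$ new elements in all. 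Let $F$ be the $\Sigma$-field these elements generate over $\K^{\Phi,\Delta}$, where the actions of $\sigma_2,\dots,\sigma_r$ on the new constants are dictated by the remaining cocycle equations $T_k(h_i)h_k=Q_{ki}T_i(h_k)h_i$ ($k,i\Ge2$). Then $\K F=\Quot\big(\K\otimes_{\K^{\Phi,\Delta}}F\big)$ is a $\{\Phi\cup\Sigma,\Delta\}$-field by the construction of $L_1L_2$ recorded at the end of Section~\ref{sec:intconddef}, the matrices $D_i:=C_ih_i$ lie in $\GL_n(\K F)$, and by construction they satisfy~\eqref{eq:intdiffdiff21},~\eqref{eq:intdiffdiff22}, and~\eqref{eq:intdiffdiff23}; everything is effective, since each step reduces to solving linear difference-differential systems over $\K$.

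I expect the real work to be the verification glossed over in the previous paragraph: that the prescribed actions of $\sigma_2,\dots,\sigma_r$ on the adjoined constants are consistent and define \emph{commuting} automorphisms, so that $F$ is genuinely a $\Sigma$-field and all of~\eqref{eq:intdiffdiff23} hold. This amounts to a closed system of identities among the $Q_{ij}$, the chosen solutions $h_i$, and their $\sigma$-transforms, which one must derive --- through the two reductions above --- from the per-parameter hypotheses~\eqref{eq:intdiffdiff1} and~\eqref{eq:intdiffdiff2} on $C_1,\dots,C_r$; this is precisely the point at which ``isomonodromic with respect to each parameter separately'' is upgraded to full isomonodromy. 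Finally, keeping the count of adjoined constants at $(r-1)n^2$ rather than $rn^2$ is exactly what forces the normalization $D_1=C_1$ and the solution of the $\sigma_1$-relations before the others.
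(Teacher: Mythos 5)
Your reduction to the adjoint system $\phi_j(h)=A_jhA_j^{-1}$, $\partial_k(h)=[B_k,h]$ and the cocycle reformulation of~\eqref{eq:intdiffdiff23} are sound and parallel the paper's substitution $D_k=ZC_k$ leading to~\eqref{eq:Z1}--\eqref{eq:Z3}. But the proposal stops exactly where the theorem actually lives. You yourself flag it: the consistency of the prescribed actions of $\sigma_2,\dots,\sigma_r$ on the adjoined constants is ``the real work,'' and it is not done. Worse, your parallel scheme (normalize $h_1=I$, adjoin all $h_2,\dots,h_r$ at once, then let the remaining cocycle relations ``dictate'' the higher $\sigma$-actions) does not even yield a well-defined prescription: the relation $T_k(h_i)h_k=Q_{ki}T_i(h_k)h_i$ for $k,i\Ge 2$ couples the two unknowns $\sigma_k(h_i)$ and $\sigma_i(h_k)$, neither of which is determined at that stage, so there is no linear system $\sigma_k(\,\cdot\,)=E\,(\,\cdot\,)$ with known coefficients whose integrability you could check. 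The paper avoids this by a genuine induction on the parameters: at step $k$ the matrices $D_1,\dots,D_{k-1}$ are already defined over the full $\{\Phi\cup\Sigma,\Delta\}$-field $\K F_{k-1}$, so~\eqref{eq:Z3} expresses $\sigma_i(Z)$, $i<k$, purely in terms of $Z$ and known data; restricting to the solution space $V$ of~\eqref{eq:Z1}--\eqref{eq:Z2} (after the second and third steps showing $V$ is $M_i$-invariant) turns it into $\sigma_i(Y)=E_iY$ with $E_i\in\GL_d(F_{k-1})$, and the commutation $\sigma_u(E_v)E_u=\sigma_v(E_u)E_v$ is then verified by the explicit computation in the proof --- this is precisely the upgrade from per-parameter to full isomonodromy that your outline postpones.

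A second gap is invertibility: you assert that one can adjoin an \emph{invertible} solution $h_j$ of the adjoint system together with the $\sigma_1$-equation at the cost of at most $n^2$ new $\{\Phi,\Delta\}$-constants, but nothing in the proposal guarantees that such an invertible solution exists over an extension whose new elements are $\{\Phi,\Delta\}$-constants (the identity solves the adjoint system but not your $\sigma_1$-equation). The paper secures this by exhibiting the specific invertible element $H_1=\sigma_1^{-1}\bigl(\sigma_k(D_1)C_kD_1^{-1}\sigma_1(C_k)^{-1}\bigr)\in V$ (the verification that $H_i$ satisfies~\eqref{eq:Z1} and~\eqref{eq:Z2} is the first step of the proof), choosing it as the first basis vector in~\eqref{eq:basis}, and taking $Z$ to be the \emph{generic} combination $\sum_s x_sZ_s$ with transcendental $\Sigma_k$-indeterminates $x_s$; then $\det Z\neq 0$ because its specialization at $x_1=1$, $x_2=\dots=x_d=0$ is $\det H_1\neq 0$. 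To repair your argument you would need both an analogue of this invertibility device and, more importantly, a replacement for the missing consistency proof --- at which point you are essentially reconstructing the paper's induction one parameter at a time.
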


\begin{proof}
This will be done by induction. Let there exist $C_1,\ldots, C_r \in\GL_n(\K)$ satisfying~\eqref{eq:intdiffdiff1} and~\eqref{eq:intdiffdiff2} and let $k$ be such that $2 \Le k \Le r$. Suppose we have already computed a $\Sigma$-field $F_{k-1}$ generated over $\K^{\Phi,\Delta}$ by at most  $(k-2)n^2$ elements and
$$
D_1,\ldots, D_{k-1} \in \GL_n\big(\K F_{k-1}\big)
$$ 
that satisfy~\eqref{eq:intdiffdiff21},~\eqref{eq:intdiffdiff22}, and~\eqref{eq:intdiffdiff23} ($k-1$ is substituted for $r$).  
 We claim that there exists a $\Sigma$-field $F_k$  generated  over $\K^{\Phi,\Delta}$ by at most $(k-1)n^2$ elements and $Z\in \GL_n(\K F_k)$ such that
$$
(D_1,\ldots,D_{k-1},D_k) := (D_1,\ldots, D_{k-1},ZC_k)
$$ 
satisfies~\eqref{eq:intdiffdiff21},~\eqref{eq:intdiffdiff22}, and~\eqref{eq:intdiffdiff23} ($k$ is substituted for $r$).
We need to construct a $\Sigma$-field $F_k$ and $Z \in\GL_n(\K F_k)$ such that 
\begin{align}\label{eq:55}
\sigma_k(A_i)ZC_k&=\phi_i(ZC_k)A_i,&&1\Le i\Le q,\\
\label{eq:6}
\sigma_k(B_i)ZC_k&=\partial_i(ZC_k)+ZC_kB_i, &&1\Le i\Le m,\\
\sigma_k(D_i)ZC_k&=\sigma_i(ZC_k)D_i, &&1\Le i\Le k-1.\label{eq:65}
\end{align}
Expanding the right-hand sides of~\eqref{eq:55},~\eqref{eq:6},~and~\eqref{eq:65} using~\eqref{eq:intdiffdiff1} and~\eqref{eq:intdiffdiff2},  we have 
\begin{align*}
\phi_i(ZC_k)A_i&=\phi_i(Z)\phi_i(C_k)A_i=\phi_i(Z)\sigma_k(A_i)C_k, & 1\Le i\Le q,\\
\partial_i(ZC_k)+ZC_kB_i&=\partial_i(Z)C_k+Z(\partial_i(C_k)+C_kB_i)=\partial_i(Z)C_k+Z\sigma_k(B_i)C_k,& 1\Le i \Le m,\\
\sigma_i(ZC_k)D_i&=\sigma_i(Z)\sigma_i(C_k)D_i,&1\Le i\Le k-1.
\end{align*} 
Therefore,~\eqref{eq:55},~\eqref{eq:6},~and~\eqref{eq:65} turn into
\begin{align}
\phi_i(Z)&=\sigma_k(A_i)Z\sigma_k(A_i)^{-1}, & 1\Le i\Le q,\label{eq:Z1}\\
\partial_i(Z)&=[\sigma_k(B_i),Z],& 1\Le i \Le m,\label{eq:Z2}\\
\sigma_i(Z)&=\left(\sigma_k(D_i)Z\sigma_k(D_i)^{-1}\right)\left(\sigma_k(D_i)C_kD_i^{-1}\sigma_i(C_k)^{-1}\right),&1\Le i\Le k-1.\label{eq:Z3}
\end{align} 
Let us now demonstrate how find an invertible solution of~\eqref{eq:Z1},~\eqref{eq:Z2}, and~\eqref{eq:Z3}.
For this, let $V$ be the $F_{k-1}$-vector space of matrix solutions of~\eqref{eq:Z1} and~\eqref{eq:Z2} in $\Mn_n(\K F_{k-1})$ and let $$\dim_{F_{k-1}}V=:d \Le n^2.$$ We will show that $V$ is invariant under the vector space isomorphisms defined by~\eqref{eq:Z3}, and then will construct a solution of the restriction of~\eqref{eq:Z3} to $V$. The former will consist of several steps. At the {\bf first step}, choose an $F_{k-1}$-basis 
\begin{equation}\label{eq:basis}
\left\{
\begin{pmatrix}
z_{11,1}\\
\vdots\\
z_{n1,1}\\
\vdots\\
z_{1n,1}\\
\vdots\\
z_{nn,1}
\end{pmatrix},\ldots,
\begin{pmatrix}
z_{11,d}\\
\vdots\\
z_{n1,d}\\
\vdots\\
z_{1n,d}\\
\vdots\\
z_{nn,d}
\end{pmatrix}
\right\}
\end{equation}
 of $V$ so that
 \begin{equation}\label{eq:Zspecial}
 \begin{pmatrix}
 z_{11,1}&\ldots&z_{1n,1}\\
 \vdots&\ddots&\vdots\\
 z_{n1,1}&\ldots&z_{nn,1}
 \end{pmatrix} = \sigma_1^{-1}\left(\sigma_k(D_1)C_kD_1^{-1}\sigma_1(C_k)^{-1}\right) \in \GL_n(\K F_{k-1}).
 \end{equation}
This matrix is indeed in $V$, which also implies that $d \Ge 1$. To prove the former (and also for the purposes of our construction that will follow), let us show that, for all $i$, $1\Le i \Le k-1$, 
\begin{equation}\label{eq:sateq}
H_i := \sigma_i^{-1}\left(\sigma_k(D_i)C_kD_i^{-1}\sigma_i(C_k)^{-1}\right)
\end{equation}
 satisfies~\eqref{eq:Z1} as an equation in $Z$. Indeed, for all $j$, $1\Le j \Le q$, we have
\begin{align*}
\phi_j\left(\sigma_k(D_i)C_kD_i^{-1}\sigma_i(C_k)^{-1}\right)&=\sigma_k\big(\phi_j(D_i)\big)\phi_j(C_k)\phi_j(D_i)^{-1}\sigma_i\big(\phi_j(C_k)\big)^{-1}\\
&=\sigma_k\big(\phi_j(D_i)A_j\big)\sigma_k(A_j)^{-1}\phi_j(C_k)\phi_j(D_i)^{-1}\sigma_i\big(\phi_j(C_k)\big)^{-1} \\
&=\sigma_i\big(\sigma_k(A_j)\big)\sigma_k(D_i)\sigma_k(A_j)^{-1}\phi_j(C_k)\phi_j(D_i)^{-1}\sigma_i\big(\phi_j(C_k)\big)^{-1} \\
&=\sigma_i\big(\sigma_k(A_j)\big)\sigma_k(D_i)C_kA_j^{-1}\phi_j(D_i)^{-1}\sigma_i\big(\phi_j(C_k)\big)^{-1}\\
&=\sigma_i\big(\sigma_k(A_j)\big)\sigma_k(D_i)C_kD_i^{-1}\sigma_i(A_j)^{-1}\sigma_i\big(\phi_j(C_k)\big)^{-1}\\
&=\sigma_i\big(\sigma_k(A_j)\big)\sigma_k(D_i)C_kD_i^{-1}\sigma_i(C_k)^{-1}\sigma_i\big(\sigma_k(A_j)\big)^{-1},
\end{align*}
which implies the claim. Moreover, we will prove that~\eqref{eq:sateq} satisfies~\eqref{eq:Z2} as an equation in $Z$. Indeed, for all $j$, $1\Le j \Le m$,
\begin{align*}
\sigma_i&\big(\sigma_k(B_j)\big)\sigma_k(D_i)C_kD_i^{-1}\sigma_i(C_k)^{-1}-\sigma_k(D_i)C_kD_i^{-1}\sigma_i(C_k)^{-1}\sigma_i\big(\sigma_k(B_j)\big)\\
&=\sigma_k\big(\partial_j(D_i)+D_iB_j\big)C_kD_i^{-1}\sigma_i(C_k)^{-1}-\sigma_k(D_i)C_kD_i^{-1}\sigma_i(C_k)^{-1}\sigma_i\big(\sigma_k(B_j)\big)\\
&=\partial_j(\sigma_k(D_i))C_kD_i^{-1}\sigma_i(C_k)^{-1}+\sigma_k(D_i)\sigma_k(B_j)C_kD_i^{-1}\sigma_i(C_k)^{-1}\\
&\quad-\sigma_k(D_i)C_kD_i^{-1}\sigma_i\left(C_k^{-1}\sigma_k(B_j)\right)\\
&=\partial_j(\sigma_k(D_i))C_kD_i^{-1}\sigma_i(C_k)^{-1}+\sigma_k(D_i)\sigma_k(B_j)C_kD_i^{-1}\sigma_i(C_k)^{-1}\\
&\quad-\sigma_k(D_i)C_kD_i^{-1}\sigma_i\left(C_k^{-1}\partial_j(C_k)C_k^{-1}+B_jC_k^{-1}\right)\\
&=\partial_j(\sigma_k(D_i))C_kD_i^{-1}\sigma_i(C_k)^{-1}+\sigma_k(D_i)\sigma_k(B_j)C_kD_i^{-1}\sigma_i(C_k)^{-1}\\
&\quad-\sigma_k(D_i)C_kD_i^{-1}\sigma_i(B_j)\sigma_i(C_k)^{-1}+\sigma_k(D_i)C_kD_i^{-1}\partial_j\left(\sigma_i(C_k)^{-1}\right)\\
&=\partial_j(\sigma_k(D_i))C_kD_i^{-1}\sigma_i(C_k)^{-1}+\sigma_k(D_i)\big(\partial_j(C_k)+C_kB_j\big)D_i^{-1}\sigma_i(C_k)^{-1}\\
&\quad-\sigma_k(D_i)C_kD_i^{-1}\sigma_i(B_j)\sigma_i(C_k)^{-1}+\sigma_k(D_i)C_kD_i^{-1}\partial_j\left(\sigma_i(C_k)^{-1}\right)\\
&=\partial_j(\sigma_k(D_i))C_kD_i^{-1}\sigma_i(C_k)^{-1}+\sigma_k(D_i)\partial_j(C_k)D_i^{-1}\sigma_i(C_k)^{-1}\\
&\quad+\sigma_k(D_i)C_k\left(B_jD_i^{-1}-D_i^{-1}\sigma_i(B_j)\right)\sigma_i(C_k)^{-1}+\sigma_k(D_i)C_kD_i^{-1}\partial_j\left(\sigma_i(C_k)^{-1}\right)\\
&=\partial_j(\sigma_k(D_i))C_kD_i^{-1}\sigma_i(C_k)^{-1}+\sigma_k(D_i)\partial_j(C_k)D_i^{-1}\sigma_i(C_k)^{-1}\\
&\quad+\sigma_k(D_i)C_k\partial_j\left(D_i^{-1}\right)\sigma_i(C_k)^{-1}+\sigma_k(D_i)C_kD_i^{-1}\partial_j\left(\sigma_i(C_k)^{-1}\right)\\
&=\partial_j\left(\sigma_k(D_i)C_kD_i^{-1}\sigma_i(C_k)^{-1}\right).
\end{align*}
Our {\bf second step} is to show that, for any $\{\Phi\cup\Sigma,\Delta\}$-ring $R$, $V$ is invariant under the invertible map $$L_i : \Mn_n(R)\to\Mn_n(R),\quad Z \mapsto \sigma_i^{-1}\left(\sigma_k(D_i)Z\sigma_k(D_i)^{-1}\right),\quad 1\Le i < k.$$
Indeed, for all $j$, $1\Le j\Le q$
\begin{align*}
\sigma_i\left(\phi_j\left(\sigma_i^{-1}\left(\sigma_k(D_i)Z\sigma_k(D_i)^{-1}\right)\right)\right)&=\sigma_k(\phi_j(D_i))\sigma_k(A_j)Z\sigma_k\left(A_j^{-1}\phi_j(D_i)^{-1}\right)\\
&=\sigma_k\big(\sigma_i(A_j)D_i\big)Z\sigma_k\left(D_i^{-1}\sigma_i(A_j)^{-1}\right)\\
&=\sigma_i\left(\sigma_k(A_j)\sigma_i^{-1}\left(\sigma_k(D_i)Z\sigma_k(D_i)^{-1}\right)\sigma_k(A_j)^{-1}\right).
\end{align*}
Moreover, for all $j$, $1\Le j\Le m$,
\begin{align*}
&\left[\sigma_i(\sigma_k(B_j)),\sigma_k(D_i)Z\sigma_k(D_i)^{-1}\right]=\sigma_i\big(\sigma_k(B_j)\big)\sigma_k(D_i)Z\sigma_k(D_i)^{-1}-\sigma_k(D_i)Z\sigma_k(D_i)^{-1}\sigma_i(\sigma_k(B_j))\\
&=\sigma_k\big(\partial_j(D_i)+D_iB_j\big)Z\sigma_k(D_i)^{-1}-\sigma_k(D_i)Z\sigma_k\left(D_i^{-1}\partial_j(D_i)D_i^{-1}+B_jD_i^{-1}\right)\\
&=\partial_j(\sigma_k(D_i))Z\sigma_k(D_i)^{-1}+\sigma_k(D_i)\big[\sigma_k(B_j),Z\big]\sigma_k(D_i)^{-1}-\sigma_k(D_i)Z\sigma_k\left(D_i^{-1}\partial_j(D_i)D_i^{-1}\right)\\
&=\partial_j\big(\sigma_k(D_i)\big)Z\sigma_k(D_i)^{-1}+\sigma_k(D_i)\partial_j(Z)\sigma_k(D_i)^{-1}- \sigma_k(D_i)Z\sigma_k\left(D_i^{-1}\partial_j(D_i)D_i^{-1}\right)\\
&=\partial_j\left(\sigma_k(D_i)Z\sigma_k(D_i)^{-1}\right).
\end{align*}
We have shown in the above that, for all $i$, $1\Le i <k$, $V$ is $L_i$-invariant and $H_i \in V$ (see~\eqref{eq:sateq}). 
In our {\bf third step}, we will show that, for all $i$, $1\Le i < k$, $V$ is invariant under the invertible map $$M_i : \Mn_n(R) \to \Mn_n(R),\quad Z \mapsto L_i(Z)H_i.$$
Indeed, for all $Z \in V$ and $j$, $1\Le j\Le q$, 
\begin{align*}
\phi_j\left(L_i(Z)H_i\right)&=\phi_j(L_i(Z))\phi_j(H_i)=\sigma_k(A_i)L_i(Z)\sigma_k(A_i)^{-1}\sigma_k(A_i)H_i\sigma_k(A_i)^{-1}\\
&=\sigma_k(A_i)L_i(Z)H_i\sigma_k(A_i)^{-1},
\end{align*}
and, for all $j$, $1\Le j\Le m$,
\begin{align*}
\partial_j\left(L_i(Z)H_i\right)&=\partial_j\big(L_i(Z)\big)H_i+L_i(Z)\partial_j(H_i)=\big[\sigma_k(B_j),L_i(Z)\big]H_i+L_i(Z)\big[\sigma_k(B_j),H_i\big]\\
&=\left[\sigma_k(B_j),L_i(Z)H_i\right].
\end{align*}
{\bf Finally}, since, for every $i$, $1\Le i \Le k-1$, the $F_{k-1}$-linear map $\sigma_i\circ M_i$ is invertible and sends $V$ into $\sigma_i(V)$ by the above, in the basis~\eqref{eq:basis}, Eq.~\eqref{eq:Z3} is of the form
$$
\sigma_i(Y) = E_iY,\quad E_i\in\GL_d\big(F_{k-1}\big),\ 1\Le i\Le k-1.
$$ 
We now let 
\begin{equation}\label{eq:defsigmaxi}
\Sigma_k = \{\sigma_k,\ldots,\sigma_r\},\ \  F_k := F_{k-1}\big\{x_s,\ 1\Le s\Le d\big\}_{\Sigma_k},\ \  \sigma_i\left(\begin{pmatrix}
x_1\\
\vdots\\
x_d
\end{pmatrix}\right):=E_i\cdot\begin{pmatrix}
x_1\\
\vdots\\
x_d
\end{pmatrix},\ \  1\Le i\Le k-1,
\end{equation}
where $x_s$, $1\Le s\Le d$, are $\Sigma_k$-indeterminates. We need to prove that the action of $\sigma_i$, $1\Le i\Le k-1$, is well-defined. For this, it is sufficient to show that, for all $u$ and $v$, $1\Le u,v\Le k-1$,
$$
\sigma_u(E_v)E_u=\sigma_v(E_u)E_v.
$$ 
For this, it is sufficient to show that~\eqref{eq:Z3} satisfies the integrability conditions. For all $u$ and $v$, $1\Le u,v\Le k-1$, we have
\begin{align*}
&\sigma_u\big(\sigma_k(D_v)Z\big)\sigma_u\left(C_kD_v^{-1}\sigma_v(C_k)^{-1}\right)\\
&=\sigma_k\left(\sigma_v(D_u)D_vD_u^{-1}\right)\sigma_k(D_u)ZC_kD_u^{-1}\sigma_u(C_k)^{-1}\sigma_u\left(C_kD_v^{-1}\sigma_v(C_k)^{-1}\right)\\
&=\sigma_v\big(\sigma_k(D_u)\big)\sigma_k(D_v)ZC_kD_u^{-1}\sigma_u(D_v)^{-1}\sigma_v(\sigma_u(C_k))^{-1}\\
&=\sigma_v\big(\sigma_k(D_u)Z\big)\sigma_v(C_k)D_vD_u^{-1}\sigma_u(D_v)^{-1}\sigma_v\big(\sigma_u(C_k)\big)^{-1}\\
&=\sigma_v\left(\sigma_k(D_u)Z\right)\sigma_v\left(C_kD_u^{-1}\sigma_u(C_k)^{-1}\right).
\end{align*}
It remains to note that, since $x_1,\ldots,x_d$ are transcendental over $\K F_{k-1}$, the matrix 
$$
Z := \begin{pmatrix}
z_{11,1}x_1+\ldots + z_{11,d}x_d&\ldots&z_{1n,1}x_1+\ldots + z_{1n,d}x_d\\
\vdots&\ddots&\vdots\\
z_{n1,1}x_1+\ldots + z_{n1,d}x_d&\ldots&z_{nn,1}x_1+\ldots + z_{nn,d}x_d
\end{pmatrix}
\in \GL_n(\K F_k)
$$
 by~\eqref{eq:Zspecial}. Indeed, $\det(Z)$ is a polynomial in the indeterminates $x_1,\ldots,x_d$ and takes the value $\det(H_1)\ne 0$ under the substitution $x_1=1,x_2=0,\ldots,x_d=0$ (see~\eqref{eq:Zspecial} and~\eqref{eq:sateq}). Therefore, $\det(Z) \ne 0$.
 Finally, the matrix $Z$ also satisfies~\eqref{eq:Z1},~\eqref{eq:Z2}, and~\eqref{eq:Z3} by construction (see~\eqref{eq:basis} and~\eqref{eq:defsigmaxi}), which finishes the proof.
\end{proof}

\section{Acknowledgments}
The author is grateful to S. Abramov, S. Chen, and the referee for their helpful suggestions.
This work has been partially supported by the NSF grant CCF-0952591.

\bibliographystyle{model1b-num-names}
\bibliography{diffint-final}
\end{document}